\documentclass[11pt,letterpaper]{amsart}

\usepackage[T1]{fontenc}
\usepackage{lmodern}
\usepackage[expansion=false]{microtype}
\usepackage{amsmath,amssymb,mathtools}
\usepackage[margin=1.08in]{geometry}
\usepackage{xcolor}
\usepackage[colorlinks=true,linkcolor=blue,citecolor=red,urlcolor=blue]{hyperref}
\usepackage{amsrefs}

\numberwithin{equation}{section}
\theoremstyle{plain}
\newtheorem{theorem}{Theorem}[section]
\newtheorem{maintheorem}{Theorem}

\newtheorem{proposition}[theorem]{Proposition}
\newtheorem{lemma}[theorem]{Lemma}
\theoremstyle{definition}
\newtheorem*{paunquestion}{Question 38}

\newcommand{\PP}{\mathbf P}
\newcommand{\cX}{\mathcal X}
\newcommand{\cY}{\mathcal Y}
\newcommand{\cI}{\mathcal I}
\newcommand{\OO}{\mathcal O}
\newcommand{\ddc}{dd^c}
\newcommand{\D}{\Delta}

\hypersetup{
  pdftitle={Failure of semipositive metric extension in a smooth projective family},
  pdfauthor={Xiangsen Qin},
  pdfsubject={Local nonextension of prescribed semipositive singular Hermitian metrics},
  pdfkeywords={singular Hermitian metric, semipositive curvature, metric extension, multiplier ideal, Zariski decomposition}
}

\begin{document}

\title[Failure of semipositive metric extension]
{Failure of semipositive metric extension in a smooth projective family}
\author[X. Qin]{Xiangsen Qin}
\address{Xiangsen Qin: Chern Institute of Mathematics and LPMC, Nankai University \\
Tianjin 300071, China}
\email{qinxiangsen@nankai.edu.cn}
\date{}

\begin{abstract}
We construct a smooth projective family of rational surfaces, an effective
line bundle on its total space, and a prescribed semipositively curved
singular Hermitian metric on the central fibre that has no semipositive
extension to any neighbourhood of that fibre. It has analytic singularities
and is of minimal singularity type. The failure persists even when the
restriction is only required to have the same singularity type as the
prescribed metric. The family is obtained by blowing up four disjoint
sections of a product, with three points becoming collinear on the central
fibre. An integrable adjoint section on that fibre cannot extend because
the corresponding adjoint systems vanish on every nearby fibre. This gives a negative answer to P\u{a}un's metric-extension question, listed as Question 38 by Dinew, Guedj, and Zeriahi.
\end{abstract}

\subjclass[2020]{Primary 32L20; Secondary 32A36, 32U40, 14D06.}
\keywords{Singular Hermitian metric, semipositive curvature, metric extension,
multiplier ideal, Zariski decomposition.}

\maketitle

\section{Introduction}\label{sec:introduction}

A singular Hermitian metric on a holomorphic line bundle has semipositive
curvature precisely when its local weights are plurisubharmonic. Extending
such a metric from a fibre requires extending these weights while preserving
both positivity and the transition rules of the line bundle. The following
question asks whether this is always possible in a smooth family.

\begin{paunquestion}[P\u aun, as recorded in {\cite{DGZ}*{Question~38}}]
Let $\pi\colon\cX\to\D$ be a smooth proper family of compact K\"ahler
manifolds over a disc, let $L$ be a pseudoeffective line bundle on $\cX$,
and let $h_0$ be a singular Hermitian metric with semipositive curvature on
$L_0:=L|_{X_0}$. Does $h_0$ extend to a singular Hermitian metric with
semipositive curvature on $L$?
\end{paunquestion}

Here and below, $X_t=\pi^{-1}(t)$ and $L_t=L|_{X_t}$. We use the squared-norm convention
$|e|_h^2=e^{-\varphi}$ for a local holomorphic frame $e$. Exact extension
means that the restriction of each local weight of $h$ to $X_0$ equals the
weight $\varphi_0$ of $h_0$ in the restricted frame. In particular, the
restricted weights must be defined and not identically $-\infty$.
Two metrics have the same singularity type if their local weights differ
by locally bounded functions.

\begin{maintheorem}\label{thm:main}
There exist a smooth projective morphism $\pi\colon\cX\to\D$ with
rational surface fibres, an effective line bundle $L$ on $\cX$, and a
semipositively curved singular Hermitian metric $h_0=e^{-\varphi_0}$ on
$L_0$ with analytic and minimal singularities, such that the following
holds. On no open neighbourhood $U$ of $X_0$ is there a semipositively
curved singular Hermitian metric $h$ on $L|_U$ whose restriction to $X_0$
is well defined and whose restricted weights $\psi$ satisfy
\begin{equation}\label{eq:weight-comparison}
  \psi\geq\varphi_0-C
\end{equation}
for a constant $C$, in compatible local trivializations.
The singularity of $h_0$ is supported on a smooth rational curve, with
divisorial Lelong number $1/2$.
\end{maintheorem}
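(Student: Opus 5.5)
We construct the family explicitly and then obstruct the extension through an adjoint section and Ohsawa--Takegoshi-type reasoning run in reverse. Take $\cY=\PP^2\times\D$ with four disjoint sections $s_1,\dots,s_4\colon\D\to\PP^2$. For $t\neq0$ the four points $s_i(t)$ are in general position. For $t=0$ the points $s_1(0),s_2(0),s_3(0)$ lie on a line $\ell$, and $s_4(0)\notin\ell$. Let $\cX$ be the blow-up of $\cY$ along the four sections, with exceptional divisors $\mathcal E_i$, and let $H$ be the pullback of the hyperplane class. Then $\pi\colon\cX\to\D$ is smooth and projective, and its fibres are blow-ups of $\PP^2$ in four points. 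On $X_0$ the strict transform $\tilde\ell\sim H-E_1-E_2-E_3$ is a smooth rational $(-2)$-curve, whereas on $X_t$ with $t\ne0$ there is no such curve.

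I would choose $L=K_{\cX/\D}+M$ for a suitable effective $M$ built from $H$ and the $\mathcal E_i$, arranged so that three things hold. First, $L_0$ is effective with Zariski decomposition $L_0=P_0+\tfrac12\tilde\ell$, where $P_0$ is nef and not big along $\tilde\ell$. Second, $h_0$ is the metric of minimal singularities, $\varphi_0=\tfrac12\log|s_{\tilde\ell}|^2+(\text{smooth})$, which has analytic singularities and divisorial Lelong number $1/2$ along $\tilde\ell$. The coefficient $1/2$ comes from $\tilde\ell^2=-2$: if $P_0\cdot\tilde\ell=0$, then $L_0\cdot\tilde\ell=-1$ forces the coefficient $a=1/2$. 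Third, $L_t$ is effective for every $t$, but $H^0(X_t,K_{X_t}+L_t)=0$ for $t\ne0$, while $K_{X_0}+L_0$, twisted by the multiplier ideal $\cI(\varphi_0)$, has a nonzero section $\sigma$ with $\int_{X_0}|\sigma|^2e^{-\varphi_0}<\infty$. The integrability should be automatic, since $\cI(\tfrac12\tilde\ell)=\OO$. Nonvanishing on $X_0$ but vanishing on nearby fibres should be a Riemann--Roch-plus-geometry computation: the adjoint system is cut out by the conditions of passing through the points, and these conditions are dependent only when the three points are collinear. Tuning $M$, for example a class like $aH-\sum b_iE_i$, so that this dichotomy holds with $L$ still effective on $\cX$ is the main bookkeeping step, and I expect that choosing the numbers will be the main obstacle.

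Now suppose $h$ exists on $U\supset X_0$ with restricted weight $\psi\ge\varphi_0-C$. Then $\int_{X_0}|\sigma|^2e^{-\psi}\le e^{C}\int|\sigma|^2e^{-\varphi_0}<\infty$. Shrink $U$ to $\pi^{-1}(\D_r)$ using properness. The pair $(\pi^{-1}(\D_r),h)$ satisfies the hypotheses of the Ohsawa--Takegoshi extension theorem for the adjoint bundle $K_{\cX}+L$ with the semipositive metric $h$ (here $\cX$ is projective, hence weakly pseudoconvex after restricting over a disc). This gives a holomorphic section $S$ of $K_{\cX}+L$ over $\pi^{-1}(\D_r)$ with $S|_{X_0}=\sigma\wedge dt$. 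By adjunction $K_{\cX}|_{X_t}=K_{X_t}$, so $S|_{X_t}\in H^0(X_t,K_{X_t}+L_t)=0$ for $t\neq0$. Hence $S$ vanishes on a dense open set, so $S\equiv0$, contradicting $\sigma\ne0$. The essential point is that the comparison \eqref{eq:weight-comparison} is only needed to transfer $L^2$-integrability, so the weaker same-singularity-type requirement is covered as well.

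It remains to check that $h_0$ has minimal singularities. This follows from the Zariski decomposition: every semipositive metric on $L_0$ has Lelong number $\ge1/2$ along $\tilde\ell$, because $P_0$ fails to be positive on $\tilde\ell$ and any positive current in $c_1(L_0)$ must contain $\tfrac12[\tilde\ell]$ by the negativity of $\tilde\ell$. Moreover, $P_0$ is semiample, or at least carries a smooth semipositive metric, so $\varphi_0$ attains this bound.
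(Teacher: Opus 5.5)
\textbf{Gap: the $m=1$ dichotomy you defer as bookkeeping cannot occur in this family.} Your $L^2$-extension step, and your use of the bound $\psi\geq\varphi_0-C$ only to transfer integrability, match the paper. The problem is the step you leave open. With divisorial coefficient $1/2$ you have $\cI(h_0)=\OO_{X_0}$, so you need $K_{X_0}+L_0$ effective but $K_{X_t}+L_t$ not effective for $t\neq0$. No line bundle on this family does this.

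Every line bundle on $\cX$ has fibre class $aH-\sum c_iE_i$. From $K_{X_0}\cdot\tilde\ell=0$ and $L_0\cdot\tilde\ell=-1$ you get $(K_{X_0}+L_0)\cdot\tilde\ell=-1$. So every member of $|K_{X_0}+L_0|$ has the form $D=\tilde\ell+D'$ with $D'\geq0$. The extremal nef classes of the quintic del Pezzo surface $X_t$ are $H$, $H-E_i$, $2H-\sum_iE_i$ and $2H-E_i-E_j-E_k$. Only two of them fail to be nef on $X_0$:
\[
2H-\textstyle\sum_iE_i=\tilde\ell+(H-E_4),\qquad 2H-E_1-E_2-E_3=\tilde\ell+H .
\]
Since $\tilde\ell\cdot(H-E_4)=\tilde\ell\cdot H=1$, this compensates $D\cdot\tilde\ell=-1$, and $D'$ meets the nef classes $H$ and $H-E_4$ nonnegatively. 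Hence $D$ meets both classes nonnegatively, and all other nef generators trivially. So $K+L$ lies in the effective cone of $X_t$. On a degree-$5$ del Pezzo surface this forces effectivity: the Zariski negative part lives on disjoint $(-1)$-curves, so it is integral, and integral nef classes are effective by Riemann--Roch. Thus $H^0(X_t,K_{X_t}+L_t)\neq0$, and no choice of $M$ or of the sections works. The heuristic that collinearity makes the point conditions dependent does not create the required jump in this untwisted system.

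\textbf{The missing idea: pass to tensor powers.} You need a power at which the multiplier ideal is nontrivial while the nearby adjoint systems still vanish. The paper keeps the coefficient $1/2$ and takes $L=2H-\sum_iE_i$, the pencil of conics through the four points.
\begin{itemize}
\item On $X_t$, $L_t$ is nef with $L_t^2=0$ and $K_{X_t}\cdot L_t=-2$. Then $(K_{X_t}+mL_t)\cdot L_t=-2$, so nefness kills every adjoint system for all $m\geq1$.
\item On $X_0$, $L_0\cdot\tilde\ell=-1$ gives the coefficient $1/2$. The metric is defined by $h_0^2=h_Ah_\ell$, with $A=3H-E_1-E_2-E_3-2E_4$ base point free.
\item At $m=4$ one has $\cI(h_0^4)=\OO_{X_0}(-2\tilde\ell)$. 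Moreover $K_{X_0}+4L_0-2\tilde\ell=3H-E_1-E_2-E_3-3E_4$ is the sum of the three lines through $p_4(0)$ and $p_i(0)$. So $s_{\tilde\ell}^2s_Q$ is a nonzero integrable central section.
\end{itemize}
Your final argument then goes through for $(4L,h^4)$. The weight bound gives $e^{-4\psi}\leq e^{4C}e^{-4\varphi_0}$, so $s_{\tilde\ell}^2s_Q$ stays integrable for $(h|_{X_0})^4$. The extended section of $K_{\cX}+4L$ vanishes on all nearby fibres, hence identically, which is the contradiction.
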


Thus there is no exact extension, and there is no extension whose restricted
weights differ from $\varphi_0$ by a bounded function.
Condition~\eqref{eq:weight-comparison} says that the restriction is no more
singular than $h_0$. Constants may initially be chosen on a finite
trivializing cover of the compact fibre and then replaced by their maximum.

Hisamoto gave counterexamples to extending semipositive metrics from
submanifolds to compact ambient manifolds
\cite{Hisamoto}*{Examples~4.12--4.13}.
In Example~4.12, the surface is the blow-up of $\PP^2$ at a point $p$,
$X=\operatorname{Bl}_p\PP^2\simeq\mathbb F_1$, where $\mathbb F_1$ is the
first Hirzebruch surface. Writing $H$ for the pullback of the line class
and $E$ for the exceptional curve, the curve has class $S=H-E$ and the
line bundle has class $L=H+E$. Here $S$ is already a fibre of the ruling:
its self-intersection is $S^2=0$ and its normal bundle is
$N_{S/X}\simeq\OO_{\PP^1}$. Over a sufficiently small disc in the ruling
base, the ruled surface is $\PP^1\times\D$ and $L$ is isomorphic to
$\operatorname{pr}_1^*\OO_{\PP^1}(2)$, where $\operatorname{pr}_1$ is the
first projection, since the degree $L\cdot S$ is $2$ and line bundles
on the disc are trivial. Consequently, every semipositive metric on $L|_S$
extends to this neighbourhood by pullback. The global obstruction in that
example therefore does not yield the local nonextension proved here;
this observation concerns Example~4.12 only. Our construction combines
local nonextension with a smooth projective family, an effective line
bundle on the total space, and a prescribed metric with analytic and
minimal singularities.

The proof of Theorem \ref{thm:main} uses the $L^2$ extension theorem to turn metric extension into
extension of an integrable adjoint section. Section~\ref{sec:obstruction}
states this implication. In Section~\ref{sec:construction}, a four-point
blow-up supplies such a section on the central fibre, whereas a nef
intersection calculation excludes it on every nearby fibre.
Section~\ref{sec:further} identifies the first tensor power at which this
obstruction occurs in the example.

\section{An adjoint obstruction to metric extension}\label{sec:obstruction}

We normalize curvature by
\[
 d^c=\frac{\partial-\bar\partial}{4\pi i},\qquad
 \Theta_h=\ddc\varphi=\frac{i}{2\pi}\partial\bar\partial\varphi.
\]
Here $[D]$ denotes the current of integration over a divisor $D$; thus
$\ddc\log|w|^2=[w=0]$. The multiplier ideal $\cI(h)$ consists of
holomorphic germs $f$ for which $|f|^2e^{-\varphi}$ is locally integrable.

For a complex manifold $Y$, $K_Y$ denotes its canonical line bundle,
and $H^0(Y,\mathcal F)$ is the space of global holomorphic sections of a
sheaf $\mathcal F$. We use additive notation: $mL=L^{\otimes m}$ and
$K_Y+mL=K_Y\otimes L^{\otimes m}$; $h^m$ is the induced tensor-power
metric. In these expressions a divisor $D$ also denotes its associated
line bundle $\OO(D)$, and divisor identities are understood as identities
of their classes, with rational coefficients when needed.

The following proposition is a consequence of Cao's Ohsawa--Takegoshi theorem
for a K\"ahler total space \cite{Cao}*{Theorem~1.1 in the arXiv version}.
For the corresponding statement for a projective family over a disc, see
\cite{DemaillyICM}*{Lemma~6.2}.

\begin{proposition}[Adjoint obstruction]\label{prop:obstruction}
Let $\pi\colon\cX\to\D$ be a proper holomorphic map from a K\"ahler
manifold to a disc with coordinate $t$. Assume that $X_0$ is a smooth
reduced fibre of codimension one. Let $(M,h_M)$ be a line bundle with a
singular Hermitian metric of semipositive curvature whose restriction to
$X_0$ is well defined. Every section
\[
 s_0\in H^0\!\left(X_0,K_{X_0}\otimes M|_{X_0}
                 \otimes\cI(h_M|_{X_0})\right)
\]
has an extension $S\in H^0(\cX,K_{\cX}\otimes M)$ whose restriction,
under the adjunction identification determined by $dt$, equals $s_0$.
Consequently, if a semipositive metric $h_0$ on $L_0$ extends
semipositively to $L$, then every section of
$(K_{X_0}+mL_0)\otimes\cI(h_0^m)$ extends to $K_{\cX}+mL$, for each
integer $m\geq1$.
\end{proposition}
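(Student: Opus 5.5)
The plan is to apply Cao's Ohsawa--Takegoshi theorem \cite{Cao} directly, with the hypersurface $X_0=\{t=0\}$ in the K\"ahler manifold $\cX$, and then specialize to $M=mL$.

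\textbf{Setup on a relatively compact piece.} Since $\pi$ is proper, after shrinking the disc we may assume $\cX$ is a K\"ahler manifold that is holomorphically convex, hence weakly pseudoconvex. We may also assume $\D$ is bounded, so $|t|\le 1$ on $\cX$. The divisor $X_0$ is cut out by the global holomorphic function $t\circ\pi$, and $dt\neq0$ along $X_0$ because the fibre is smooth and reduced. This gives the adjunction isomorphism $K_{\cX}|_{X_0}\simeq K_{X_0}$, $u\wedge dt\mapsto u$. Cao's theorem applies to a line bundle $M$ on a weakly pseudoconvex K\"ahler manifold, with a singular metric $h_M$ satisfying $i\Theta_{h_M}\ge0$, and with the hypersurface given by a function $s=t$ with $|s|\le1$; the auxiliary line bundle $\OO(X_0)$ is trivial with trivial metric. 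For any holomorphic section $s_0$ of $K_{X_0}\otimes M|_{X_0}$ with $\int_{X_0}|s_0|^2_{h_M}<\infty$, the theorem produces $S\in H^0(\cX,K_{\cX}\otimes M)$ with $S|_{X_0}=s_0\wedge dt$ and an $L^2$ bound.

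\textbf{Integrability of $s_0$.} A section of $K_{X_0}\otimes M|_{X_0}\otimes\cI(h_M|_{X_0})$ is locally $f\,e$ with $f\in\cI(h_M|_{X_0})$. Hence $|s_0|^2_{h_M}$ is locally integrable, and compactness of $X_0$ gives finite total integral. Here the hypothesis that the restriction is well defined is used: the restricted weight is plurisubharmonic and not $\equiv-\infty$, so the multiplier ideal on $X_0$ makes sense. The extension $S$ then restricts to $s_0$ under the $dt$-identification, which proves the first assertion.

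\textbf{The consequence.} Suppose $h$ is a semipositive extension of $h_0$. Take $M=mL$ and $h_M=h^m$. Then $h_M$ is semipositive, its weight is $m\varphi$, and its restriction to $X_0$ is $m\varphi_0$, so it is well defined with $\cI(h_M|_{X_0})=\cI(h_0^m)$. The first part then applies verbatim.

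\textbf{Main obstacle.} The delicate step is matching hypotheses with Cao's theorem. That theorem requires the ambient manifold to be weakly pseudoconvex K\"ahler, and allows arbitrary singular semipositive metrics, but only when the restriction is not identically $-\infty$ on $X_0$. I would handle pseudoconvexity with the exhaustion $-\log(r-|t|^2)$ on $\pi^{-1}(\{|t|<r\})$ for $r<1$. Since $\cX$ is only assumed K\"ahler near $X_0$ and extension to a neighbourhood is all that is needed, I would replace $\D$ by a smaller disc if necessary. In the projective case one may instead cite \cite{DemaillyICM}*{Lemma~6.2}.
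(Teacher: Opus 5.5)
Your proposal is correct and follows the paper's proof. You apply Cao's Ohsawa--Takegoshi theorem to $X_0=\operatorname{div}(t)$, you obtain the finite $L^2$ norm from the multiplier ideal condition together with compactness of $X_0$, and you specialize to $(M,h_M)=(mL,h^m)$. Your explicit check of weak pseudoconvexity is a useful addition that the paper leaves implicit. However, no shrinking of the disc is needed: since $\pi$ is proper over the whole disc of radius $R$, the function $-\log(R^2-|t|^2)\circ\pi$ is already a psh exhaustion of all of $\cX$, so you get $S$ on $\cX$ itself, as the statement asserts.
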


\begin{proof}
The multiplier ideal condition and compactness of $X_0$ give the finite
central $L^2$ norm required by \cite{Cao}*{Theorem~1.1}, which produces
$S$ with $S|_{X_0}=s_0\wedge dt$. More explicitly, in coordinates
$(z_1,\ldots,z_n,t)$ near $X_0$, adjunction sends
\[
 f(z,0)\,dz_1\wedge\cdots\wedge dz_n\wedge dt\otimes e
 \quad\longmapsto\quad
 f(z,0)\,dz_1\wedge\cdots\wedge dz_n\otimes e|_{X_0}.
\]
The smooth reduced fibre is the principal divisor $\operatorname{div}(t)$,
so this identification is global. For a submersion, the relative canonical
bundle is $K_{\cX/\D}=K_{\cX}\otimes\pi^*K_\D^{-1}$. Trivializing $K_\D$
by $dt$ identifies $K_{\cX/\D}$ with $K_{\cX}$ and gives the same adjunction
identification on every fibre. The last assertion follows by
taking $(M,h_M)=(mL,h^m)$.
\end{proof}

\section{The four-point family and the nonextension theorem}
\label{sec:construction}

\subsection{The family and its nearby fibres}

Put $\cY=\PP^2\times\D$, write $[x:y:z]$ for homogeneous coordinates,
and consider the four disjoint sections
\begin{align*}
 p_1(t)&=[1:0:0], & p_2(t)&=[0:1:0],\\
 p_3(t)&=[1:1:t], & p_4(t)&=[0:0:1].
\end{align*}
We identify each section with its graph: the notation $p_i(\D)$ means
$\{(p_i(t),t):t\in\D\}\subset\cY$. Let
\[
 \beta\colon\cX=\operatorname{Bl}_{\bigsqcup_i p_i(\D)}\cY
 \longrightarrow\cY
\]
be the blow-up, and let $\pi$ be its composition with the projection to
$\D$. Here $\operatorname{Bl}_Z\cY$ denotes the blow-up along $Z$, and
$\bigsqcup$ denotes the disjoint union of the four section images.
Thus $X_t$ is $\PP^2$ blown up at the four points $p_i(t)$.
Near each centre, coordinates $(u,v,t)$ identify the blow-up with
$\operatorname{Bl}_0(\mathbf C^2)\times\D$. Hence $\cX$ is smooth and
$\pi$ is a proper submersion. It is projective because blow-ups are
projective morphisms. The same construction is algebraic over the affine
$t$-line and gives a smooth quasi-projective variety; restricting a
K\"ahler form from a projective embedding shows that $\cX$ is K\"ahler.

Write $\operatorname{pr}_1\colon\cY\to\PP^2$ for the first projection
and set $H=(\operatorname{pr}_1\circ\beta)^*\OO_{\PP^2}(1)$, the pullback
of the hyperplane line bundle. Let $E_i$ be the exceptional divisor over
$p_i(\D)$ and set
\[
 L=2H-E_1-E_2-E_3-E_4.
\]

\begin{lemma}\label{lem:psef}
The line bundle $L$ is effective on the total space $\cX$.
\end{lemma}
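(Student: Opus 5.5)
The plan is to write down an explicit nonzero section of $L$ on $\cX$. I will take a holomorphic family of plane conics passing through all four points $p_i(t)$ at once, pull it back by $\beta$, and divide by the exceptional divisors. Then $L$ is effective because it has a nonzero holomorphic section.

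First, find the conics. A quadratic form $Q=ax^2+by^2+cz^2+dxy+exz+fyz$ vanishes at $[1:0:0]$, $[0:1:0]$, $[0:0:1]$ exactly when $a=b=c=0$. Vanishing at $[1:1:t]$ then reads $d+(e+f)t=0$. Choosing $d=0$, $e=1$, $f=-1$ satisfies this for every $t$, and gives
\[
 Q=z(x-y).
\]
This is independent of $t$, so $Q$ defines a nonzero section of $\operatorname{pr}_1^*\OO_{\PP^2}(2)$ on $\cY$. It vanishes along each of the four curves $p_i(\D)$:
\begin{itemize}
\item $z=0$ at $p_1(t)$ and $p_2(t)$;
\item $x-y=0$ at $p_3(t)$ and $p_4(t)$.
\end{itemize}

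Second, pull back and divide. Let $\sigma_i$ denote the canonical section of $\OO(E_i)$. Since $Q$ lies in the ideal of the smooth codimension-two centre $p_i(\D)$, its pullback $\beta^*Q$, a section of $2H$, vanishes to order at least one along $E_i$. This follows from the local chart $\operatorname{Bl}_0(\mathbf C^2)\times\D$ with coordinates $(u,v,t)$: there every function in the ideal $(u,v)$ pulls back to a multiple of the exceptional equation. The $E_i$ are pairwise disjoint, so
\[
 \beta^*Q=\sigma_1\sigma_2\sigma_3\sigma_4\cdot s
\]
for a holomorphic section $s$ of $2H-E_1-E_2-E_3-E_4=L$. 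The section $s$ is not identically zero because $Q\not\equiv0$ and $\beta$ is an isomorphism off $\bigcup_iE_i$. Hence $L$ is effective.

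The only step that needs care is the divisibility of $\beta^*Q$ by each $\sigma_i$, and it is handled by the local product chart above. It is not needed to track whether the vanishing order is exactly one, for instance at $p_3(0)$, where the points become collinear. Any excess vanishing is simply absorbed into the effective divisor of $s$.
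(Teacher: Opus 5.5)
Your proof is correct and is essentially the paper's: you exhibit a relative conic vanishing along all four section curves and lift it through the blow-up to a nonzero section of $2H-\sum_iE_i$. Your $z(x-y)$ is exactly $F_1-F_2$ for the paper's pencil $F_1=x(z-ty)$, $F_2=y(z-tx)$, and your local-chart divisibility argument spells out the lifting step that the paper states in one line.
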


\begin{proof}
The relative quadratic forms
\begin{equation}\label{eq:quadrics}
 F_1=x(z-ty),\qquad F_2=y(z-tx)
\end{equation}
vanish along all four sections. Thus they belong to the ideal sheaf of
the blow-up centre and lift to nonzero holomorphic sections of
$2H-\sum_iE_i=L$. In particular, a divisor of either lifted section gives
a semipositively curved divisor metric on $L$.
\end{proof}

We also write $H,E_i$ for the restricted classes on a fibre, where each
$E_i$ is an exceptional curve isomorphic to $\PP^1$. For $t\ne0$,
no three of the four points are collinear. Solving $F_1=F_2=0$ gives
exactly these four points. At $p_1,p_2,p_3,p_4$, respectively, suitable
centred affine coordinates $(u,v)$ give the following pairs of linear
initial terms:
\[
 (v-tu,-tu),\qquad (-tu,v-tu),\qquad
 (v-tu,v),\qquad (u,v).
\]
Each pair is independent for $t\ne0$, so the transformed pencil has no
base point on any exceptional curve. It follows that the complete linear
system $|L_t|$ is basepoint free and $L_t$ is nef, meaning that it has
nonnegative degree on every irreducible curve. Writing $D\cdot C$ for
intersection numbers on the surface and $D^2=D\cdot D$, the relations
$H^2=1$, $H\cdot E_i=0$, and $E_i\cdot E_j=-\delta_{ij}$ give
\begin{equation}\label{eq:nearby-intersections}
 L_t^2=0,\qquad K_{X_t}=-3H+\sum_{i=1}^4E_i,
 \qquad K_{X_t}\cdot L_t=-2.
\end{equation}

\begin{lemma}\label{lem:vanishing}
For every $t\ne0$ and every integer $m\geq1$,
$H^0(X_t,K_{X_t}+mL_t)=0$.
\end{lemma}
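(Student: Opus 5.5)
The plan is the nef intersection argument announced in the introduction. Fix $t\neq0$ and $m\geq1$, and suppose $K_{X_t}+mL_t$ has a nonzero section. Its divisor is then an effective divisor $D$ in the class $K_{X_t}+mL_t$. The discussion preceding the lemma shows that $L_t$ is nef, since $|L_t|$ is basepoint free. A nef class has nonnegative degree on every irreducible curve, hence on every effective divisor. It follows that $L_t\cdot D\geq0$.

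On the other hand, \eqref{eq:nearby-intersections} gives
\[
 L_t\cdot(K_{X_t}+mL_t)=K_{X_t}\cdot L_t+m\,L_t^2=-2+0=-2<0.
\]
This contradicts $L_t\cdot D\geq0$, so $H^0(X_t,K_{X_t}+mL_t)=0$.

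The only input that needs any care is the nefness of $L_t$. It rests on the claim that the pencil spanned by the lifts of $F_1,F_2$ has no base points on $X_t$. Away from the exceptional curves, the base locus of the pencil in $\PP^2$ is exactly the four points: the conics $x(z-ty)$ and $y(z-tx)$ meet in $4$ points counted with multiplicity, and $p_1,\dots,p_4$ are four distinct common zeros, so each is a simple intersection. On each $E_i$, the paper lists the linear initial terms of the two conics; since these are linearly independent for $t\neq0$, the strict transforms meet $E_i$ in distinct points. So the lifted pencil is basepoint free, and $L_t$ is nef.

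With that granted, the argument is a one-line intersection computation and involves no real obstacle.
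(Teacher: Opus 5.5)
Your proof is correct and is essentially the paper's argument. An effective divisor in $K_{X_t}+mL_t$ would have nonnegative degree against the nef class $L_t$, which contradicts $(K_{X_t}+mL_t)\cdot L_t=-2$. Your B\'ezout justification that the pencil is basepoint free is a fine supplement to the paper's list of initial terms; it implicitly uses that the two conics share no component when $t\neq0$.
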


\begin{proof}
A nonzero section would give an effective divisor $D_t$ with
$D_t\cdot L_t\geq0$ by nefness, contradicting
$(K_{X_t}+mL_t)\cdot L_t=-2$.
\end{proof}

\subsection{The central metric and its singularities}

On $X_0$, let $\ell$ be the strict transform of the line $z=0$ through
$p_1(0),p_2(0),p_3(0)$: it is the closure of the inverse image of that
line away from the blown-up points. Then
\[
 \ell=H-E_1-E_2-E_3,\qquad
 A:=2L_0-\ell=3H-E_1-E_2-E_3-2E_4.
\]

\begin{lemma}\label{lem:bpf}
The linear system $|A|$ is basepoint free.
\end{lemma}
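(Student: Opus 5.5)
The approach is to identify $|A|$ with an explicit linear system of plane cubics and check base points directly, both off and on the exceptional curves. Sections of $A=3H-E_1-E_2-E_3-2E_4$ on $X_0$ are pullbacks of cubic forms $G(x,y,z)$ vanishing at $p_1(0)=[1:0:0]$, $p_2(0)=[0:1:0]$, $p_3(0)=[1:1:0]$ and vanishing to order at least two at $p_4(0)=[0:0:1]$, with the appropriate multiples of the $E_i$ removed.

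First I will write down this space.
\begin{itemize}
\item The double point at $[0:0:1]$ kills the monomials $z^3,z^2x,z^2y$. Hence $G=z\,Q(x,y)+C(x,y)$ with $Q$ a binary quadric and $C$ a binary cubic.
\item Restricting to $z=0$, vanishing at the three collinear points $[1:0],[0:1],[1:1]$ forces $C=c\,xy(x-y)$.
\end{itemize}
So the system is
\[
 G=z\,Q(x,y)+c\,xy(x-y),\qquad Q\in\langle x^2,xy,y^2\rangle,\ c\in\mathbf C,
\]
a $4$-dimensional space of sections.

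Next I will check base points away from the exceptional curves.
\begin{itemize}
\item A common zero of all members satisfies $zQ=0$ for every $Q$ and $xy(x-y)=0$.
\item If $z\ne0$, then $Q(x,y)=0$ for all $Q$ forces $x=y=0$, which is the point $p_4(0)$.
\item If $z=0$, the condition $xy(x-y)=0$ gives exactly $p_1(0),p_2(0),p_3(0)$.
\end{itemize}
So off $E_1\cup\cdots\cup E_4$ there are no base points.

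On the exceptional curves, a point of $E_i$ is a base point exactly when it lies in the common zero locus of the leading forms, of the prescribed degree $1$ or $2$, of all members at $p_i(0)$. I will compute these leading forms in affine charts.
\begin{itemize}
\item At $p_4(0)$, with affine coordinates $(x,y)$ and $z=1$, the quadratic leading term is $Q(x,y)$. Since $x^2$ and $y^2$ have no common zero direction, $E_4$ is base point free.
\item At $p_1(0)$, with $x=1$ and coordinates $(y,z)$, the linear term is $Q(1,0)\,z+c\,y$. Both $z$ and $y$ occur, so there is no common direction.
\item At $p_2(0)$, with $y=1$, the linear term is $Q(0,1)\,z-c\,x$.
\item At $p_3(0)$, with $x=1$ and $w=y-1$, the linear term is $Q(1,1)\,z-c\,w$.
\end{itemize}
In each of the last three cases the linear terms span both coordinate directions, so $E_1,E_2,E_3$ are base point free.

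The only point requiring care is the justification that base points on $E_i$ are detected exactly by common tangent-cone directions of the leading forms. This is the standard local description of the blow-up chart: the strict-transform section restricted to $E_i$ is the leading form. I will state it once in the chart $\operatorname{Bl}_0\mathbf C^2$ and not belabour it. Everything else is an explicit finite check, so I expect no real obstacle. The one subtle input, the collinearity forcing $C$ to be a multiple of $xy(x-y)$, is precisely where $t=0$ enters.
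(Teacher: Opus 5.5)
Your proposal is correct and follows the paper's proof essentially step for step. You identify $H^0(X_0,A)$ with the span of $x^2z,\ xyz,\ y^2z,\ xy(x-y)$, show that the plane base locus is exactly the four points, and check that the linear initial terms at $p_1(0),p_2(0),p_3(0)$ and the quadratic initial terms at $p_4(0)$ have no common zero on the exceptional curves; your explicit leading forms $Q(1,0)z+cy$, $Q(0,1)z-cx$, $Q(1,1)z-cw$ are a slightly more detailed version of the paper's remark that these contain $(z,y)$, $(z,x)$, $(z,x-y)$.
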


\begin{proof}
The corresponding cubics on $\PP^2$ are spanned by
\begin{equation}\label{eq:A-generators}
 x^2z,\qquad xyz,\qquad y^2z,\qquad xy(x-y).
\end{equation}
Indeed, double vanishing at $p_4$ removes the monomials $z^3,xz^2,yz^2$;
vanishing at $p_1,p_2,p_3$ then leaves precisely the displayed span.
The common zero set consists of the four points: away from $z=0$ the
first three cubics force $x=y=0$, and on $z=0$ the fourth forces
$xy(x-y)=0$. At $p_1,p_2,p_3$, the linear initial terms contain,
respectively, $(z,y)$, $(z,x)$, and $(z,x-y)$ in the relevant affine
charts. They have no common zero on the corresponding exceptional
curves. At $p_4$, in the chart $z=1$, the quadratic initial terms contain
$x^2,xy,y^2$, which have no common zero on the exceptional line.
Subtracting $E_1+E_2+E_3+2E_4$ therefore resolves all base points.
\end{proof}

Let $h_A$ be the smooth semipositive Fubini--Study metric on $A$ induced
by~\eqref{eq:A-generators}, and let $h_\ell$ be the divisor metric on
$\OO(\ell)$, with local weight $\log|w|^2$ for a local equation $w=0$
of $\ell$. The relation $A+\ell=2L_0$ defines a metric $h_0$ by
\begin{equation}\label{eq:central-metric}
 h_0^2=h_Ah_\ell,\qquad
 \Theta_{h_0}=\tfrac12\Theta_{h_A}+\tfrac12[\ell]\geq0.
\end{equation}
In compatible local frames its weight is
$\varphi_0=\tfrac12\varphi_A+\tfrac12\log|w|^2$. Thus $h_0$ has analytic
singularities, with divisorial Lelong number $1/2$ along $\ell$ and no
other singularities.

The identities
\begin{equation}\label{eq:zariski}
 L_0=\tfrac12A+\tfrac12\ell,\qquad
 A\cdot\ell=0,\qquad \ell^2=-2
\end{equation}
and the nefness of $A$ show that this is the classical Zariski
decomposition. Boucksom's characterization on surfaces
\cite{Boucksom}*{Theorem~4.8} identifies its negative part with the
divisorial negative part. Here $c_1(L_0)$ denotes the first Chern class
of $L_0$, and $\nu(T,\ell)$ is the generic Lelong number of $T$ along
$\ell$. Every closed positive current $T\in c_1(L_0)$ therefore satisfies
\begin{equation}\label{eq:divisorial-bound}
 \nu(T,\ell)\geq\tfrac12,\qquad T-\tfrac12[\ell]\geq0.
\end{equation}
The first inequality follows from the lower-bound property of minimal
divisorial multiplicities in \cite{Boucksom}*{\S3.1, after Definition~3.1};
the second follows from the Siu decomposition
\cite{Boucksom}*{\S2.2.1}.
The residual current is cohomologous to $\Theta_{h_A}/2$. For a
semipositive metric with curvature $T$ and local weights $\varphi_T$, it
can be written
\[
 T-\tfrac12[\ell]=\tfrac12\Theta_{h_A}+\ddc u,
 \qquad u=\varphi_T-\varphi_0.
\]
Here the difference, initially taken away from $\ell$, has a global
quasi-plurisubharmonic representative on $X_0$: subtracting the
divisorial weight in~\eqref{eq:divisorial-bound} leaves a positive
curvature current. Since $X_0$ is compact, $u$ is bounded above.
Consequently $\varphi_T\leq\varphi_0+C$, which proves that $h_0$ has
minimal singularities.

\begin{lemma}\label{lem:ideal}
For every integer $m\geq1$,
\begin{equation}\label{eq:multiplier-ideal}
 \cI(h_0^m)=\OO_{X_0}\!\left(-\left\lfloor\frac m2\right\rfloor\ell\right).
\end{equation}
In particular, $\cI(h_0^4)=\OO_{X_0}(-2\ell)$.
\end{lemma}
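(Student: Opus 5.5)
The multiplier ideal is determined locally, so the plan is to compute it in local coordinates near each point of $X_0$. The weight of $h_0^m$ is
\[
m\varphi_0=\tfrac m2\varphi_A+\tfrac m2\log|w|^2 .
\]
Since $h_A$ is a smooth metric, $\varphi_A$ is smooth and hence locally bounded. So $e^{-m\varphi_0}$ is comparable, up to bounded positive factors, to $|w|^{-m}$. We therefore get $\cI(h_0^m)=\cI(|w|^{-m})$ locally, where $|w|^{-m}=e^{-\frac m2\log|w|^2}$. Away from $\ell$ this weight is bounded, so the ideal is $\OO_{X_0}$ there. This agrees with $\OO(-\lfloor m/2\rfloor\ell)$, which is trivial off $\ell$.

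Near a point of $\ell$, I will use that $\ell$ is a smooth rational curve (the strict transform of a line). So we may choose local coordinates $(w,v)$ with $\ell=\{w=0\}$. We must decide for which holomorphic germs $f$ the function $|f|^2|w|^{-m}$ is locally integrable. The standard Fubini argument in the $w$ variable handles this. Write $f=w^k g$ with $g\not\equiv0$ on $\{w=0\}$, i.e.\ $k=\operatorname{ord}_\ell f$. Near a point where $g(0,v)\neq0$, integrability of $|w|^{2k-m}$ in $w$ requires $2k-m>-2$, i.e.\ $k>m/2-1$. This amounts to $k\geq\lfloor m/2\rfloor$.

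For the converse, take $k\ge\lfloor m/2\rfloor$. Then $|f|^2|w|^{-m}\le C|w|^{2k-m}$ with $2k-m\ge -1>-2$, which is integrable near every point. For the necessity direction, integrability must hold near every point of $\ell$. The points where $g(0,\cdot)\ne0$ are dense on $\ell$, so failure of integrability at one of them already excludes $f$. Hence $f\in\cI(h_0^m)$ if and only if $w^{\lfloor m/2\rfloor}\mid f$, giving \eqref{eq:multiplier-ideal}. Setting $m=4$ gives $\cI(h_0^4)=\OO_{X_0}(-2\ell)$.

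The only mildly delicate step is justifying the reduction to the model weight. This requires the facts that $h_A$ is smooth and that $\ell$ is smooth and reduced, so there are no normal-crossing or multiplicity complications. Both facts come from Lemma~\ref{lem:bpf} and the description of $\ell$ as the strict transform of a line. Alternatively, one could invoke the general formula $\cI(c[D])=\OO(-\lfloor c\rfloor D)$ for a smooth divisor $D$, but the direct computation above is short enough to include.
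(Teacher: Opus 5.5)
Your proof is correct and follows essentially the same route as the paper. You use the smoothness of $h_A$ to reduce $e^{-m\varphi_0}$ to $|w|^{-m}$ times a smooth positive factor, and the radial integral in the normal variable $w$ gives $k\geq\lfloor m/2\rfloor$; necessity comes from nearby points of $\ell$ where the residual factor $g$ is nonzero, and your explicit bound $2k-m\geq -1$ spells out the paper's one-line sufficiency remark.
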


Here $\lfloor a\rfloor$ is the greatest integer not exceeding $a$, and
$\OO_{X_0}(-k\ell)$ is the ideal sheaf of functions vanishing to order
at least $k$ along $\ell$.

\begin{proof}
Near $\ell$, the metric $h_0^m$ is $|w|^{-m}$ times a smooth positive
factor. For a germ vanishing to order $k$ along $\ell$, normal
integrability is determined by
\[
 \int_0^\varepsilon r^{2k-m+1}\,dr<\infty
 \quad\Longleftrightarrow\quad
 k>\frac m2-1
 \quad\Longleftrightarrow\quad
 k\geq\left\lfloor\frac m2\right\rfloor.
\]
Divisibility by this power of $w$ is sufficient at every point, and is
necessary by taking nearby points where the remaining factor is nonzero.
There are no other singularities. In particular, $h_0^4$ is
$|w|^{-4}$ times a smooth positive factor, consistently with the stated
ideal.
\end{proof}

\subsection{The central adjoint section and nonextension}

Let $q_i$ be the strict transform of the line joining $p_4(0)$ to
$p_i(0)$, for $i=1,2,3$, and put $Q=q_1+q_2+q_3$. These lines are
$y=0$, $x=0$, and $x-y=0$, so
\begin{equation}\label{eq:Q-class}
 Q=3H-E_1-E_2-E_3-3E_4
   =K_{X_0}+4L_0-2\ell=K_{X_0}+2A.
\end{equation}
Writing $s_\ell$ and $s_Q$ for the canonical sections of $\OO(\ell)$
and $\OO(Q)$, with zero divisors $\ell$ and $Q$, respectively, and using
these line bundle identifications, we obtain the nonzero section
\begin{equation}\label{eq:central-section}
 \sigma_0=s_\ell^2s_Q\in
 H^0\!\left(X_0,(K_{X_0}+4L_0)\otimes\cI(h_0^4)\right).
\end{equation}
The factor $s_\ell^2$ cancels the pole $|w|^{-4}$ in its squared norm,
so the required integrability holds on all of $X_0$.

\begin{proof}[Proof of Theorem~\ref{thm:main}]
The preceding construction provides the stated family, effective line
bundle, and metric. Suppose that a semipositive metric $h$ exists on
$L|_U$ for an open neighbourhood $U$ of $X_0$, with a well-defined
restriction of weight $\psi$ satisfying~\eqref{eq:weight-comparison}.
Properness permits us to shrink the base so that the whole restricted
family lies in $U$. Indeed, otherwise there would be $t_j\to0$ and
$x_j\in X_{t_j}\setminus U$; properness over a closed smaller disc would
give a convergent subsequence with limit in $X_0\setminus U$, a
contradiction.

In compatible local frames,
\[
 e^{-4\psi}\leq e^{4C}e^{-4\varphi_0}.
\]
Thus~\eqref{eq:central-section} remains integrable for $(h|_{X_0})^4$,
and Proposition~\ref{prop:obstruction}, applied to $(4L,h^4)$ on the
restricted family, gives
\[
 \Sigma\in H^0(\cX,K_{\cX}+4L),\qquad
 \Sigma|_{X_0}=\sigma_0
\]
under the adjunction identification. For every $t\ne0$, Lemma~\ref{lem:vanishing}
gives $\Sigma|_{X_t}=0$. Hence $\Sigma$ vanishes on the open dense set
$\pi^{-1}(\D\setminus\{0\})$ and therefore everywhere, contradicting
$\sigma_0\ne0$.
\end{proof}

\section{Further properties of the example}\label{sec:further}

By~\eqref{eq:multiplier-ideal}, the relevant central classes for
$m=1,2,3$ are
\begin{align*}
 K_{X_0}+L_0&=-H,\\
 K_{X_0}+2L_0-\ell&=-E_4,\\
 K_{X_0}+3L_0-\ell&=2H-E_1-E_2-E_3-2E_4.
\end{align*}
The first class has negative intersection with the nef class $H$, and
the second has negative intersection with any ample class, so neither
is effective. A section of the third would give a plane conic through
the three collinear points $p_1,p_2,p_3$ and with multiplicity at least
two at $p_4$. B\'ezout's theorem forces $z=0$ to be a component, while
the residual line would have to have multiplicity two at $p_4$, which
is impossible. For $m=4$, the effective divisor $Q$ in~\eqref{eq:Q-class}
provides the section. Thus the first nonzero integrable adjoint system
in this example occurs at $m=4$.

The singularity coefficient is fixed by the central class: the relations
$L_0\cdot\ell=-1$ and $\ell^2=-2$ give the coefficient $1/2$ in its
negative part. On nearby fibres $L_t$ is nef of square zero, and every
adjoint space $H^0(X_t,K_{X_t}+mL_t)$ vanishes. This change accounts for
the obstruction. The total-space bundle $L$ nevertheless has semipositive
divisor metrics by Lemma~\ref{lem:psef}; the theorem excludes precisely
the stated restrictions along the central fibre.

\section*{Acknowledgments}

The author used ChatGPT (OpenAI) as an aid in preparing this
manuscript and checking the proofs. All mathematical arguments,
references, and conclusions were independently verified by the author, who
assumes full responsibility for the contents of the paper.

\end{document}